\newcommand{\sF}{\mathcal{F}}
\newcommand{\G}{\mathbb{G}}
\newcommand{\Q}{\mathbf{Q}}
\newcommand{\Z}{\mathbf{Z}}
\renewcommand{\epsilon}{\varepsilon}
\newcommand{\Spec}{\operatorname{Spec}}
\newcommand{\uHom}{\operatorname{\underline{Hom}}}
\newcommand{\et}{{\operatorname{\acute{e}t}}}
\newcommand{\Nis}{{\operatorname{Nis}}}
\newcommand{\eff}{{\operatorname{eff}}}
\renewcommand{\o}{{\operatorname{o}}}
\newcommand{\DM}{\operatorname{DM}}
\newcommand{\Dm}{\operatorname{DM}}
\newcommand{\by}{\xrightarrow}
\newcommand{\iso}{\by{\sim}}
\newcommand{\hocolim}{\operatorname{hocolim}}
\newtheorem{thm}{Theorem}
\newtheorem{prop}{Proposition}
\newtheorem{cor}{Corollary}
\newtheorem{lemma}{Lemma}
\theoremstyle{definition}
\theoremstyle{remark}
\newtheorem{rk}{Remark}
\newtheorem{rks}[rk]{Remarks}
\begin{document}
\title{Birational motives and the norm residue isomorphism theorem}
\author{Bruno Kahn}
\address{French-Japanese Laboratory in Mathematics and its Interactions\\ 
IRL2025 CNRS and The University of Tokyo\\
Graduate School of Mathematical Sciences\\
3-8-1 Komaba, Meguro, 153-8914\\
Tokyo, Japan}
\email{bruno.kahn@imj-prg.fr}
\date{February 20, 2025}
\begin{abstract}We point out a relationship between the norm residue isomorphism theorem of Suslin-Voevodsky-Rost and the theory of birational motives, as well as its generalisation to ``higher jets''.
\end{abstract}
\subjclass[2010]{19D45, 19E15}
\maketitle

Let $k$ be a perfect field, and let $\DM^\eff$ denote Voevodsky's (unbounded) triangulated category of effective Nisnevich motivic complexes over $k$.  For $D\in \DM^\eff$ and $n\ge 0$, we have an adjunction morphism
\begin{equation}\label{eq6}
\uHom(\Z(n),D)(n)\to D
\end{equation}
where $\uHom$ is the internal Hom of $\Dm^\eff$ and, as usual, we abbreviate the notation $\otimes \Z(n)$ to $(n)$. The following lemma is well-known:

\begin{lemma}\label{l3} The morphism \eqref{eq6} is an isomorphism if and only if $D$ is divisible by $\Z(n)$, i.e. if $D$ is of the form $E(n)$.
\end{lemma}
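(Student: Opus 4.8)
The plan is to recognise \eqref{eq6} as the counit of an adjunction and then to reduce the whole statement to Voevodsky's Cancellation Theorem. Recall that the Tate twist functor $(n)=-\otimes\Z(n)\colon\DM^\eff\to\DM^\eff$ admits $\uHom(\Z(n),-)$ as a right adjoint, since by definition of the internal Hom one has a natural isomorphism
\[
\Hom(A(n),D)\cong\Hom(A,\uHom(\Z(n),D)).
\]
Write $\eta_A\colon A\to\uHom(\Z(n),A(n))$ for the unit and $\epsilon_D\colon\uHom(\Z(n),D)(n)\to D$ for the counit of this adjunction; the counit $\epsilon_D$ is exactly the morphism \eqref{eq6}.

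The ``only if'' direction is then purely formal and requires no input: if $\epsilon_D$ is an isomorphism, then $D\cong\uHom(\Z(n),D)(n)=E(n)$ with $E:=\uHom(\Z(n),D)$, so $D$ is divisible by $\Z(n)$.

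For the converse I would argue as follows. Suppose $D=E(n)$. The triangle identities for the adjunction give, for every object $A$,
\[
\epsilon_{A(n)}\circ(\eta_A)(n)=\operatorname{id}_{A(n)}.
\]
Taking $A=E$ exhibits $(\eta_E)(n)$ as a right inverse of $\epsilon_{E(n)}=\epsilon_D$. It therefore suffices to show that $\eta_E$ is an isomorphism: granting this, $(\eta_E)(n)$ is an isomorphism, and the identity above forces $\epsilon_D$ to be its inverse, hence an isomorphism.

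This last point is precisely where the nonformal input enters: the unit $\eta_A$ is an isomorphism for every $A$ exactly when the functor $(n)$ is fully faithful, and that fully faithfulness is the content of Voevodsky's Cancellation Theorem for $\DM^\eff$ over a perfect field $k$. I expect this to be the only substantive ingredient; once it is in hand the lemma drops out of the adjunction formalism, and conversely the failure of cancellation would break the ``if'' direction. Thus the heart of the matter is to invoke and apply cancellation correctly, everything else being routine manipulation of the unit and counit together with the triangle identities.
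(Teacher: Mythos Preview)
Your argument is correct and matches the paper's approach: the paper's proof consists of the single sentence ``By Voevodsky's cancellation theorem, twisting by $n$ is fully faithful on $\DM^\eff$'', leaving the adjunction formalism implicit, and you have simply unpacked that formalism in detail. The key nonformal input (cancellation $\Rightarrow$ full faithfulness of $(n)$ $\Rightarrow$ unit is an isomorphism) is identical in both.
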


\begin{proof} By Voevodsky's cancellation theorem \cite{voecan}, twisting by $n$ is fully faithful on $\Dm^\eff$.
\end{proof}

Recall that $\Dm^\eff$ carries a ``homotopy'' $t$-structure. Let now $\DM^\eff_\et$ be the triangulated category  of étale motivic complexes. It also carries a homotopy $t$-structure for which the ``change of topology'' functor
\begin{equation}\label{eq0}
\DM^\eff\by{\alpha^*}\DM^\eff_\et
\end{equation}
is $t$-exact; the functor $\alpha^*$ has a right adjoint $R\alpha_*$ \cite[C.4]{kl}. The main result of this note is:

\begin{thm}
\label{t2} Suppose that $D=R\alpha_*C$ with $C\in \Dm_\et^\eff$ bounded and torsion (i.e. that $C\otimes \Q=0$). Then \eqref{eq6} is an isomorphism.
\end{thm}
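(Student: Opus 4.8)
The plan is to deduce the theorem from Lemma~\ref{l3}: it suffices to show that $D=R\alpha_*C$ is divisible by $\Z(n)$. The starting point is the internal-hom identity
\[
\uHom(\Z(n),R\alpha_*C)\iso R\alpha_*\uHom_{\et}(\Z(n)_\et,C),\qquad \Z(n)_\et:=\alpha^*\Z(n),
\]
valid because $\alpha^*$ is monoidal with right adjoint $R\alpha_*$. Writing $B:=\uHom_{\et}(\Z(n)_\et,C)$, the morphism \eqref{eq6} then factors as
\[
(R\alpha_*B)(n)\by{p} R\alpha_*\bigl(B(n)_\et\bigr)\iso R\alpha_*C,
\]
where the second arrow is $R\alpha_*$ applied to the \emph{étale} evaluation $B(n)_\et\to C$ and $p$ is the projection morphism. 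The key elementary input is that the étale evaluation is already an isomorphism for torsion $C$: by Suslin--Voevodsky rigidity $\Z(1)_\et$ is $\otimes$-invertible on torsion étale motives (with $\Z/m(1)_\et\iso\mu_m$), so every torsion object of $\Dm^\eff_\et$ is divisible by $\Z(n)_\et$ and the étale analogue of \eqref{eq6} holds. Thus \eqref{eq6} is an isomorphism \emph{if and only if the projection morphism $p$ is}.

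Before attacking $p$ I would reduce its target. The class of bounded torsion $C$ for which $p$ is invertible (for all $n$) is a thick subcategory of $\Dm^\eff_\et$; using boundedness to dévisser to the cohomology sheaves and then Suslin--Voevodsky rigidity, one reduces to the generators $C=\alpha^*\Z/m(q)=\mu_m^{\otimes q}$ (working one prime $\ell$ invertible in $k$ at a time, the primary part at the characteristic, if present, being treated separately via the Geisser--Levine description). For such $C$ one has $B=\mu_m^{\otimes(q-n)}$, and the assertion becomes the projection formula
\[
\bigl(R\alpha_*\mu_m^{\otimes(q-n)}\bigr)(n)\iso R\alpha_*\mu_m^{\otimes q}.
\]

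The crux --- and the point where the norm residue theorem enters --- is this projection formula for $R\alpha_*$ against a Tate twist. In the \emph{stable} categories it is formal, since there $\Z(n)$ is $\otimes$-invertible; the whole difficulty is effectivity. Concretely, writing $r$ for the right adjoint to the inclusion $\Dm^\eff\hookrightarrow\Dm$ (the effective cover), $p$ is invertible precisely when $r$ commutes with $\otimes\Z(n)$ on $Y:=R\alpha_*\mu_m^{\otimes q}$, equivalently when the cofibre of the effective cover $irY\to Y$ remains right-orthogonal to $\Dm^\eff$ after twisting by $\Z(n)$. I expect this to be the main obstacle, and to be exactly a reformulation of Beilinson--Lichtenbaum: the discrepancy between $R\alpha_*\mu_m^{\otimes q}$ and its effective truncation $\Z/m(q)$ is concentrated in the range of degrees $>q$ pinned down by the norm residue isomorphism theorem, and this vanishing line is what keeps the co-effective part stable under Tate twisting. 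Granting it, the projection formula holds, $p$ is an isomorphism, $D=R\alpha_*C$ is divisible by $\Z(n)$, and Lemma~\ref{l3} finishes the proof.
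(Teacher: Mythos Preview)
Your reduction is sound up to the last paragraph: the adjunction identity, the factorisation of \eqref{eq6} through the projection morphism $p$, the observation that the étale evaluation is invertible on torsion objects (because $\mu_m$ is $\otimes$-invertible among $m$-torsion objects of $\Dm^\eff_\et$), and the dévissage to $C=\mu_m^{\otimes q}$ are all correct. (The aside about Geisser--Levine is moot: $\Dm_\et^\eff$ is already $\Z[1/p]$-linear.) But at the crucial step you stop proving and start expecting. The passage through the stable category is not set up precisely: if $Y$ denotes the \emph{effective} $R\alpha_*\mu_m^{\otimes q}$ then $rY=Y$ and your criterion is vacuous; you presumably intend the stable pushforward, but then you must identify its effective cover and show that its co-effective part stays right-orthogonal to $\Dm^\eff$ after a positive Tate twist---which is exactly the substance of the theorem, and you only assert that it ``is a reformulation of Beilinson--Lichtenbaum'' and proceed by ``granting it''. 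Indeed, the projection formula you have reduced to is literally \eqref{eq7}, which is Theorem~\ref{t2} for $C=\mu_m^{\otimes q}$ rewritten via the adjunction: your reformulation is illuminating but does not reduce the difficulty.

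The paper supplies the missing argument in two ways. The principal route reformulates the statement as $\nu_{<n}R\alpha_*C=0$, where $\nu_{<n}:\Dm^\eff\to \Dm^\eff/\Dm^\eff(n)$ is the localisation; after a sharper dévissage (transfer, $l$-Sylow, unipotence, Tate) down to $C=\Z/l$ with $\mu_l\subset k$, it invokes Levine's ``inverting the Bott element'' theorem to exhibit $R\alpha_*\Z/l$ as $\hocolim_r \Z/l(r)\{-r\}$, each term lying in $\Dm^\eff(n)$. The second route (Proposition~\ref{p1}), closer to what you sketch, uses Beilinson--Lichtenbaum directly and concretely: from the triangle $\Z/l(q)\{-q\}\to R\alpha_*\Z/l\to \tau_{>q}R\alpha_*\Z/l\by{+1}$ and the fact that $\nu_{<n}$ kills $\Z/l(q)$ for $q\ge n$, one gets $\nu_{<n}R\alpha_*\Z/l\iso\nu_{<n}\tau_{>q}R\alpha_*\Z/l$ for every $q\ge n$; comparing consecutive $q$ forces $\nu_{<n}R^{q}\alpha_*\Z/l=0$ for $q>n$, whence $\nu_{<n}\tau_{>n}R\alpha_*\Z/l=0$ and the claim follows. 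That truncation comparison is the step your proposal gestures at but does not carry out.
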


This provides a large quantity of objects of $\DM^\eff$ which are infinitely divisible by $\Z(1)$, of a quite different nature from those of \cite[Rem. 1.10]{hk}.

\begin{cor}\label{c2} Suppose that $D=R\alpha_*C$ with $C\in \Dm_\et^\eff$ bounded. Then the cone of \eqref{eq6} is uniquely divisible (multiplication by $m$ is an isomorphism for all $m\ne 0$).
\end{cor}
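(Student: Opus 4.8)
The plan is to deduce the corollary from Theorem~\ref{t2} by reduction modulo $m$. Write $E$ for the cone of \eqref{eq6} attached to $D=R\alpha_* C$. An object of a triangulated category is uniquely divisible precisely when multiplication by $m$ is invertible for every $m\ne 0$; since $\Z/m=\mathrm{cone}(\Z\by{m}\Z)$ and $\DM^\eff$ is monoidal, this is equivalent to the vanishing of $E\otimes\Z/m=\mathrm{cone}(m\colon E\to E)$ for every such $m$. So it suffices to prove $E\otimes \Z/m=0$ for all $m\ne0$.

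The key point is that the construction \eqref{eq6} is compatible with $-\otimes\Z/m$. Because $\uHom(\Z(n),-)$, the Tate twist $(n)$, and $R\alpha_*$ are all triangulated, they commute with multiplication by $m$ and hence with its cone, i.e. with $-\otimes\Z/m$. Thus, on the one hand, $R\alpha_*C\otimes\Z/m\cong R\alpha_*(C\otimes\Z/m)$; on the other, applying naturality of \eqref{eq6} to $m\colon D\to D$ identifies the morphism \eqref{eq6} for $D\otimes\Z/m$ with the morphism \eqref{eq6} for $D$ tensored with $\Z/m$. Passing to cones yields a natural isomorphism between the cone of \eqref{eq6} for $R\alpha_*(C\otimes\Z/m)$ and $E\otimes\Z/m$.

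Now put $C'=C\otimes\Z/m$. It is bounded, being the cone of a map of bounded complexes, and it is torsion: $C'\otimes\Q=(C\otimes\Q)\otimes\Z/m=\mathrm{cone}(m\colon C\otimes\Q\to C\otimes\Q)=0$ because $m$ acts invertibly after $\otimes\Q$. Hence $C'$ satisfies the hypotheses of Theorem~\ref{t2}, so \eqref{eq6} is an isomorphism for $D=R\alpha_*C'$ and its cone vanishes. Combined with the previous paragraph this gives $E\otimes\Z/m=0$ for all $m\ne 0$, as required.

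The only delicate point I anticipate is the bookkeeping in the second paragraph: one must check that \eqref{eq6} really commutes with $-\otimes\Z/m$ compatibly on source and target, so that the cone of the reduced morphism is the reduction of the cone $E$. This rests on all the functors involved being triangulated together with the fact that $\Z/m$ is obtained from $\Z$ by a single cone; once this is granted, the result is immediate from Theorem~\ref{t2} applied to $C\otimes\Z/m$.
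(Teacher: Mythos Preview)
Your proof is correct and follows exactly the same approach as the paper's: the paper's entire argument is the one line ``Consider the exact triangles $C\by{m} C\to C\otimes \Z/m\by{+1}$,'' which is precisely your reduction modulo $m$ followed by an appeal to Theorem~\ref{t2} for the bounded torsion object $C\otimes\Z/m$. You have simply spelled out the compatibilities (triangulated functors commute with $-\otimes\Z/m$, naturality of the counit \eqref{eq6}) that the paper leaves implicit.
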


\begin{proof} Consider the exact triangles $C\by{m} C\to C\otimes \Z/m\by{+1}$.
\end{proof}

\begin{rks} a) Theorem \ref{t2} is false for $D$ torsion in general, as the example $D=\Z/l$ shows. Similarly, the torsion hypothesis on $C$ is necessary, as the example $C=\alpha^*\Q$ shows.\\
b) For $C\in \DM^\eff_\et$, an adjunction game provides an isomorphism in $\DM^\eff$
\[R\alpha_* \uHom_\et(\alpha^*\Z(n),C)\iso \uHom(\Z(n),R\alpha_* C) \]
where $\uHom_\et$ is the internal Hom of $\Dm_\et^\eff$. Let $m>0$ be an integer invertible in $k$. The change of coefficients functor
\[\DM_\et^\eff(k)\to \DM_\et^\eff(k,\Z/m)\]
has a right adjoint $i_m$ which induces a natural isomorphism
\[\uHom_\et(C',i_m C)\simeq i_m \uHom^m_\et(C'\otimes \Z/m,C)\]
for any $(C',C)\in \DM_\et^\eff(k)\times \DM_\et^\eff(k,\Z/m)$ where $\uHom^m_\et$ is the internal Hom of $\DM_\et^\eff(k,\Z/m)$. Take $C'=\alpha^*\Z(n)$; then $C'\otimes \Z/m=\mu_m^{\otimes n}$. Thus, if $C\in \DM_\et^\eff$ is bounded and $m$-torsion, the isomorphism \eqref{eq6} of Theorem \ref{t2} for $D=R\alpha_* C$ takes the form  
\[R\alpha_* \uHom^m_\et(\mu_m^{\otimes n},C)(n)\iso R\alpha_* C.
\]
For $C=\mu_m^{\otimes i}$, this gives as a special case an isomorphism
\begin{equation}\label{eq7}
R\alpha_* \mu_m^{\otimes i-n}(n)\iso R\alpha_* \mu_m^{\otimes i}.
\end{equation}
Let $\Gamma_m=Gal(k(\mu_m)/k)$: this is a subgroup of $(\Z/m)^*$. Since $\mu_m^{\otimes n}\simeq \Z/m$ when $n$ is divisible by $\gamma_m=|\Gamma_m|$, this also gives the following corollary.
\end{rks}

\begin{cor}\label{c3} For $C\in \Dm_\et^\eff$ bounded and of exponent $m$, the function $n\mapsto \uHom(\Z(n),R\alpha_*C)$ is periodic of period $\gamma_m$.\qed
\end{cor}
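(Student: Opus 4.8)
The plan is to derive Corollary~\ref{c3} from the adjunction computations of Remark~b), reducing it to the periodicity of the Tate object $\mu_m^{\otimes n}$. Throughout I take $m$ invertible in $k$ (as in Remark~b)) and regard the exponent-$m$ object $C$ as living in $\DM_\et^\eff(k,\Z/m)$. Combining the adjunction isomorphism $\uHom(\Z(n),R\alpha_*C)\iso R\alpha_*\uHom_\et(\alpha^*\Z(n),C)$ with the change-of-coefficients formula of Remark~b), and using $\alpha^*\Z(n)\otimes\Z/m\simeq\mu_m^{\otimes n}$, I would first record the natural isomorphism
\[\uHom(\Z(n),R\alpha_*C)\iso R\alpha_*\,\uHom^m_\et(\mu_m^{\otimes n},C).\]
Crucially, the dependence on $n$ of the right-hand side is confined to the first argument $\mu_m^{\otimes n}$; hence it suffices to show that $n\mapsto\mu_m^{\otimes n}$ is periodic of period $\gamma_m$ in $\DM_\et^\eff(k,\Z/m)$ and then to apply the functor $R\alpha_*\,\uHom^m_\et(-,C)$.

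The heart of the matter is then the single identity $\mu_m^{\otimes\gamma_m}\simeq\Z/m$. Since $m$ is invertible in $k$, rigidity identifies $\mu_m$ with an object of the heart of the homotopy $t$-structure, namely the locally constant sheaf on which the absolute Galois group acts through the cyclotomic character $\chi\colon\Gamma_m\hookrightarrow(\Z/m)^*$. By Lagrange's theorem every element of $\Gamma_m$ has order dividing $\gamma_m=|\Gamma_m|$, so $\chi^{\gamma_m}$ is trivial; therefore $\mu_m^{\otimes\gamma_m}$ carries the trivial Galois action and is isomorphic, as a sheaf and hence as an object of $\DM_\et^\eff(k,\Z/m)$, to the monoidal unit $\Z/m$. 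Tensoring this isomorphism by $\mu_m^{\otimes n}$ gives $\mu_m^{\otimes(n+\gamma_m)}\simeq\mu_m^{\otimes n}$, which is exactly the required periodicity.

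Putting the two steps together yields $\uHom(\Z(n+\gamma_m),R\alpha_*C)\simeq\uHom(\Z(n),R\alpha_*C)$ for every $n$, proving the corollary. I expect the only genuinely delicate point to be the passage from the Galois-module isomorphism $\mu_m^{\otimes\gamma_m}\cong\Z/m$ to an honest isomorphism in the triangulated category $\DM_\et^\eff(k,\Z/m)$; this is harmless precisely because, with $m$ invertible in $k$, $\mu_m$ sits in degree $0$ and the comparison underlying the norm residue theorem realizes it as a genuine locally constant sheaf, so that an isomorphism of the underlying sheaves suffices. All the remaining steps are formal consequences of the adjunctions already assembled in Remark~b).
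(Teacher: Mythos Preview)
Your argument is correct and is exactly the route the paper takes: combine the adjunction isomorphism $\uHom(\Z(n),R\alpha_*C)\simeq R\alpha_*\,\uHom^m_\et(\mu_m^{\otimes n},C)$ assembled in Remark~b) with the periodicity $\mu_m^{\otimes(n+\gamma_m)}\simeq\mu_m^{\otimes n}$, which the paper records in the sentence immediately preceding the corollary. Your write-up has the small virtue of making explicit that Theorem~\ref{t2} itself is not invoked here---only the formal adjunction identities of Remark~b) and the Lagrange-theorem computation of the Galois action on $\mu_m^{\otimes\gamma_m}$.
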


The following reformulation of Theorem \ref{t2} will be useful. Recall that, in \cite{birat-tri}, we introduced and studied a triangulated category of birational motivic complexes  $\DM^\o$; by  \emph{loc. cit.}, Prop. 4.2.5., one has
\[\DM^\o=\DM^\eff/\DM^\eff(1).\]

Higher versions of $\DM^\o$ were introduced in \cite[Def. 3.4]{BKladic} (they are also implicit in \cite{hk}):
\[\DM_{<n}^\eff =\DM^\eff/\DM^\eff(n)\]
so that $\DM_{<1}=\DM^\o$.

By \emph{loc. cit.}, Prop. 3.5,  the localisation functor $\nu_{<n}:\DM^\eff\to \DM_{<n}^\eff$ has a right adjoint $\iota_n$; moreover,  the homotopy $t$-structure of $\DM^\eff$ induces a $t$-structure on $\DM_{<n}^\eff$ via $\iota_n$ (\emph{loc. cit.}, Prop. 3.6). By Lemma \ref{l3}, \emph{Theorem \ref{t2} is then equivalent to saying that $\nu_{<n} R\alpha_*C=0$ for any bounded torsion $C\in \Dm_\et^\eff$.}

\begin{proof}[Proof of Theorem \ref{t2}] 
We start with a Grothendieckian dévissage, first reducing to the case of a single torsion sheaf $\sF$. 
Such $\sF$ comes from the \emph{small} étale site of $\Spec k$ by the Suslin-Voevodsky rigidity theorem \cite{sv1}, i.e. is a Galois module. The functor $\nu_{<n}$ commutes with infinite direct sums as a left adjoint; since étale cohomology of sheaves has the same  property, this reduces us to the case where the stalk(s) of $\sF$ are finite, killed by some $m>0$ that we may further assume to be a power of a prime number $l$ different from the characteristic (since $\Dm_\et^\eff$ is $\Z[1/p]$-linear, where $p$ is the exponential characteristic of $k$ \cite[Prop. 3.3.3 2)]{voetri}). A standard transfer/$l$-Sylow argument now allows us to assume that $\sF$ becomes constant after a finite Galois extension of $k$ whose Galois group $G$ has order a power of $l$. Since the statement is stable under extensions of sheaves and the $l$-group $G$ acts unipotently on $\sF$, we may finally assume that $\sF=\Z/l$. By a standard argument due to Tate ($[k(\mu_l):k]$ is prime to $l$), we further reduce to the case where $\mu_l\subset k$.

We now use Levine's ``inverting the motivic Bott element'' theorem \cite{levine}. The following corrects the presentation in \cite[\S 3]{bkmumbai}. From the isomorphism
\begin{equation}\label{eq5}
\Z(1)\simeq \G_m[-1]
\end{equation}
we get an exact triangle
\begin{equation}\label{eq6.3}
\mu_l[0]\to \Z/l(1)\to \G_m/l[-1]\by{+1}.
\end{equation}

Adding the isomorphism $\Z(n)\otimes \Z/l(1)\iso \Z/l(n+1)$, we get a map in $\DM^\eff$:
\begin{equation}\label{eq2.1}
\Z(n)\otimes \mu_l\to \Z/l(n+1).
\end{equation}

For clarity, write $C\mapsto C\{n\}$ for tensoring with the (constant) sheaf $\mu_l^{\otimes n}$ an object $C\in \DM^\eff$ such that $lC=0$. Thus  $\Z(n)\otimes \mu_l\simeq \Z/l(n)\{1\}$, hence we get from \eqref{eq2.1} another map
\[\Z/l(n)\to \Z/l(n+1)\{-1\}\]
which becomes an isomorphism after sheafifying for the \'etale topology. Iterating, we get a commutative diagram
\[\xymatrix{
\Z/l(0)\ar[r]\ar[drr]&\Z/l(1)\{-1\}\ar[r]\ar[dr]&\Z/l(2)\{-2\}\ar[r]\ar[d]&\ar[dl]\dots\\
&&R\alpha_*\Z/l
}\]
which induces a morphism
\begin{equation}\label{eq2}
\hocolim_r \Z/l(r)\{-r\}\to R\alpha_*\Z/l
\end{equation}
where ``the'' homotopy colimit is the one of Bökstedt-Neeman \cite{bn}; the main theorem of \cite{levine} is that \eqref{eq2} is an isomorphism when $l>2$, or when $l=2$ and either  $\text{char} k>0$ or $-1$ is a square in $k$.

This concludes the proof except for $l=2$ in the exceptional case; we complete this case with  the following proposition, which gives an ``unstable'' version of the previous divisibility at a higher cost. 
\end{proof}

\begin{prop}\label{p1} One has $\nu_{<n} R^q\alpha_*\Z/l=0$ for $q>n$, and $\nu_{<n} R\alpha_*\Z/l=0$. (See comment before the proof of Theorem \ref{t2} for $\nu_{<n}$.)
\end{prop}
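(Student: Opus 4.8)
The plan is to deduce both assertions from the Beilinson--Lichtenbaum form of the norm residue theorem; this is legitimate, and indeed is the ``higher cost'' advertised, since it is exactly the ingredient that Levine's Bott-element theorem replaced in the generic case. We may assume $\mu_l\subset k$ with $l$ invertible in $k$, as in the reduction preceding the statement (for $l=2$ this costs nothing). Then the constant sheaf $\Z/l$ is identified with $\mu_l^{\otimes j}$ for every $j$, and Beilinson--Lichtenbaum furnishes canonical isomorphisms $\tau_{\le j}R\alpha_*\Z/l\simeq \Z/l(j)$ for all $j\ge 0$; in particular $R^q\alpha_*\Z/l\simeq \mathcal{K}^M_q/l$. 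The single fact I would extract is that $\Z/l(j)=(\Z/l(j-n))(n)\in\DM^\eff(n)$ whenever $j\ge n$, so that \emph{all} the truncations $\tau_{\le j}R\alpha_*\Z/l$ with $j\ge n$ lie in $\DM^\eff(n)$.

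For the first assertion, fix $q>n$ and consider the truncation triangle
\[\tau_{\le q-1}R\alpha_*\Z/l\to \tau_{\le q}R\alpha_*\Z/l\to R^q\alpha_*\Z/l[-q]\by{+1}.\]
Since $q-1\ge n$, the first two terms lie in $\DM^\eff(n)$ by the previous paragraph; as $\DM^\eff(n)$ is a triangulated (indeed localising) subcategory, so does the third, whence $\nu_{<n}R^q\alpha_*\Z/l=0$.

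For the second assertion I would use the triangle
\[\Z/l(n)=\tau_{\le n}R\alpha_*\Z/l\to R\alpha_*\Z/l\to \tau_{>n}R\alpha_*\Z/l\by{+1},\]
whose first term is in $\DM^\eff(n)$, and show that the last term is as well. One cannot merely invoke thickness, because $R\alpha_*\Z/l$ is unbounded above (its cohomology sheaves $\mathcal{K}^M_q/l$ are nonzero for arbitrarily large $q$), so a limiting argument is genuinely needed. As $R\alpha_*\Z/l$ is bounded below and the homotopy $t$-structure is compatible with coproducts, the Postnikov tower converges: $\tau_{>n}R\alpha_*\Z/l=\hocolim_N \tau_{[n+1,N]}R\alpha_*\Z/l$. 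Each finite stage sits in a triangle $\Z/l(n)\to\Z/l(N)\to \tau_{[n+1,N]}R\alpha_*\Z/l\by{+1}$ with its first two terms in $\DM^\eff(n)$, hence itself lies in $\DM^\eff(n)$; since $\DM^\eff(n)$ is localising it is stable under $\hocolim$, so $\tau_{>n}R\alpha_*\Z/l\in\DM^\eff(n)$ and $\nu_{<n}R\alpha_*\Z/l=0$. Equivalently, one applies the left adjoint $\nu_{<n}$, which commutes with $\hocolim$, to this presentation.

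The main obstacle is conceptual rather than technical: everything rests on the isomorphisms $\tau_{\le j}R\alpha_*\Z/l\simeq \Z/l(j)$, i.e. on the full strength of the norm residue theorem in all residue characteristics and at $l=2$ with no hypothesis on $\sqrt{-1}$, which is precisely why this argument reaches the exceptional case that Levine's theorem misses. The only remaining delicate point is the convergence of the Postnikov tower for the unbounded-above complex $R\alpha_*\Z/l$, which is what forces the localising (not merely thick) nature of $\DM^\eff(n)$ and the compatibility of $\nu_{<n}$ with homotopy colimits into play.
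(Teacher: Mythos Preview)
Your argument is correct and is essentially the paper's own: both deduce the proposition from Beilinson--Lichtenbaum by identifying $\tau_{\le j}R\alpha_*\Z/l$ with $\Z/l(j)\in\DM^\eff(n)$ for $j\ge n$ and then comparing consecutive truncation triangles. The only difference is cosmetic: the paper works with the triangles $\Z/l(q)\{-q\}\to R\alpha_*\Z/l\to\tau_{>q}R\alpha_*\Z/l$ and compares the resulting isomorphisms $\nu_{<n}R\alpha_*\Z/l\iso\nu_{<n}\tau_{>q}R\alpha_*\Z/l$ for $q$ and $q+1$, whereas you use the $\tau_{\le q-1}\to\tau_{\le q}$ triangle directly; and for the second assertion you spell out the Postnikov--\allowbreak $\hocolim$ step (using that $\nu_{<n}$, as a left adjoint, preserves $\hocolim$) which the paper's ``Therefore'' leaves implicit.
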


\begin{proof}    By the Beilinson-Lichtenbaum conjecture \cite{sv2,voem}, we have an exact triangle for any $q\ge 0$:
\begin{equation}\label{eq1}
\Z/l(q)\{-q\}\to R\alpha_* \Z/l\to \tau_{>q} R\alpha_* \Z/l\by{+1}
\end{equation}

Suppose that $q\ge n$. Applying $\nu_{<n}$ to \eqref{eq1}, we get an isomorphism
\[\nu_{<n}R\alpha_* \Z/l\iso \nu_{<n}\tau_{>q} R\alpha_* \Z/l.\]

Comparing this with the same isomorphism for $q+1$, we get the first statement. Therefore, $\nu_{<n}\tau_{>q} R\alpha_* \Z/l=0$ for any $q\ge n$ (for example for $q=n$) and we conclude. 
\end{proof}

\begin{rks} a) Since $R^0\alpha_*\Z/l=\Z/l$ is a birational sheaf, we have an isomorphism $\Z/l\iso \nu_{<n} R^q\alpha_*\Z/l$ for $q=0$. When $n=1$, this allows us to compute $\nu_{<1} R^1\alpha_*\Z/l$ thanks to the isomorphisms
\[\tau_{\le 1} \nu_{<1} R\alpha_*\Z/l\iso \nu_{<1} R\alpha_*\Z/l=0\]
where the first (\emph{resp.} second) isomorphism follows from the first (\emph{resp.} second) part of Proposition \ref{p1}: this gives
\[\nu_{<1} R^1\alpha_* \Z/l\simeq \Z/l[2].
\]
It is less clear how to compute $\nu_{<n} R^q\alpha_*\Z/l$ for $0<q\le n$ when $n\ge 2$.\\
b) In the spirit of \cite{bkmumbai}, Proposition \ref{p1} for $n=1$ conversely implies formally the Beilinson-Lichtenbaum conjecture: we neglect twists by powers of $\mu_l$ for simplicity, and argue by induction on $q$. Since $\otimes$ is right $t$-exact in $\DM^\eff$ and in view of \eqref{eq5}, there is a natural map
\begin{equation}\label{eq4}
\tau_{\le q-1}R\alpha_* \Z/l\otimes \Z(1)\to \tau_{\le q}R\alpha_* \Z/l
\end{equation}
and we have to show that it is an isomorphism.
The condition $\nu_{<1} R^q\alpha_*\Z/l\allowbreak=0$ means that $R^q\alpha_*\Z/l$ is divisible by $\Z(1)$, which in turn implies that $\tau_{\le q}R\alpha_* \Z/l$ is divisible by $\Z(1)$. But the computation of the étale cohomology of $X\times \G_m$ for smooth $X$ shows that the adjoint of \eqref{eq4}
\[\tau_{\le q-1}R\alpha_* \Z/l\to \uHom(\Z(1),\tau_{\le q}R\alpha_* \Z/l)\]
is an isomorphism, and we conclude with Lemma \ref{l3}.\\
c) If $k$ has virtually finite étale cohomological dimension (\emph{e.g.} is finitely generated), we can relax the hypothesis ``bounded'' to ``bounded below'' in Theorem \ref{t2} and Corollary \ref{c2}. Indeed, we reduce by a transfer argument to the case where $k$ has finite cohomological dimension. As used before, $\nu_{<n}$ commutes with infinite direct sums, and so does  $R\alpha_*$ by Lemma \ref{l2} below. Since, for any $C\in \DM^\eff_\et$, the natural map
\[\hocolim\tau_{\le n} C\to C\]
is an isomorphism, this reduces us to the bounded case.
\end{rks}

\begin{lemma}\label{l2} Suppose that $k$ has finite étale cohomological dimension. Then $R\alpha_*$ commutes with infinite direct sums.
\end{lemma}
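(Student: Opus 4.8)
The plan is to reduce the statement to the compactness of a single object. Since $R\alpha_*$ is a right adjoint it automatically commutes with products, so the content is about coproducts; here I would invoke the standard criterion (Neeman) that a right adjoint between compactly generated triangulated categories commutes with arbitrary direct sums precisely when its left adjoint preserves compact objects. Concretely, for a family $(C_i)$ in $\DM^\eff_\et$ I would consider the canonical comparison map $\bigoplus_i R\alpha_* C_i\to R\alpha_*(\bigoplus_i C_i)$ and test it against the motives $M(X)$ of smooth $X$, which form a generating family of compact objects of $\DM^\eff$ (compactness on the Nisnevich side being unconditional), so that this test suffices. By adjunction and this compactness the left-hand side computes $\bigoplus_i \Hom_{\DM^\eff_\et}(M_\et(X),C_i[*])$, while the right-hand side computes $\Hom_{\DM^\eff_\et}(M_\et(X),(\bigoplus_i C_i)[*])$, where $M_\et(X)=\alpha^* M(X)$. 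Thus everything reduces to showing that $M_\et(X)$ is compact in $\DM^\eff_\et$, i.e. that $\Hom_{\DM^\eff_\et}(M_\et(X),-)$ commutes with direct sums.

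To prove this compactness I would use the homotopy $t$-structure on $\DM^\eff_\et$ and the associated hypercohomology spectral sequence
\[E_2^{s,t}=\Hom_{\DM^\eff_\et}(M_\et(X),H^t(C)[s])\Rightarrow \Hom_{\DM^\eff_\et}(M_\et(X),C[s+t]),\]
whose $E_2$-terms are the étale cohomology groups $H^s_\et(X,H^t(C))$ of $X$ with coefficients in the heart objects $H^t(C)$. Two features are relevant. First, these groups vanish for $s>N$, where $N=N(X)$ is finite: this is exactly where the hypothesis enters, the finite étale cohomological dimension of $k$ bounding the torsion part and the agreement of étale and Nisnevich cohomology rationally bounding the rest, uniformly in the coefficient sheaf (the bound depends only on $\operatorname{cd}_\et k$ and $\dim X$). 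Second, since $X$ is of finite type over a field, hence quasi-compact and quasi-separated, the functor $H^s_\et(X,-)$ commutes with filtered colimits, in particular with direct sums, in each fixed degree $s$.

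Now I would run the dévissage. Because only the finitely many columns $0\le s\le N$ are nonzero, the spectral sequence converges and each total degree carries a finite filtration with finitely many graded pieces. The homotopy $t$-structure on $\DM^\eff_\et$ is compatible with direct sums, so $H^t(\bigoplus_i C_i)=\bigoplus_i H^t(C_i)$; combined with the degreewise commutation of $H^s_\et(X,-)$ with direct sums and the finiteness of the filtration, this forces $\Hom_{\DM^\eff_\et}(M_\et(X),-)$ to commute with direct sums. Hence $M_\et(X)$ is compact and the lemma follows.

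I expect the only real obstacle to be the unboundedness from above of the objects $C_i$: for a general $C\in \DM^\eff_\et$ the spectral sequence a priori has infinitely many columns, and the interference of infinitely many cohomological degrees is exactly what can destroy commutation with direct sums. The finiteness of $N(X)$ is precisely the device that collapses this to a finite filtration and saves the argument; without it the conclusion genuinely fails, consistently with $R\alpha_*\Z$ carrying the étale cohomology of the base in unboundedly many degrees when $\operatorname{cd}_\et k=\infty$. A minor secondary point to verify carefully is the uniform, coefficient-independent bound $N(X)$ on the cohomological dimension of $X$ seen in the heart of the homotopy $t$-structure, which I would extract from $\operatorname{cd}_\et k<\infty$ together with $\dim X<\infty$.
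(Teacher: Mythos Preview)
Your argument is correct and follows essentially the same route as the paper: test the comparison map against the generators $M(X)$, reduce via the hypercohomology spectral sequence (whose convergence is exactly where the finite \'etale cohomological dimension enters), and finish with the fact that \'etale cohomology of a finite-type scheme commutes with direct sums of sheaves because the site is coherent. The only cosmetic difference is that you package the Nisnevich side as ``$M(X)$ is compact in $\DM^\eff$'' and then reduce everything to the compactness of $M_\et(X)$, whereas the paper treats the Nisnevich and \'etale hypercohomology symmetrically; the substance is identical.
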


\begin{proof} Let $(C_i)_{i\in I}$ be a family of objects of $\DM_\et^\eff$. We want to show that the comparison map in $\DM^\eff$
\[\bigoplus_i R\alpha_* C_i\to R\alpha_*\bigoplus_i C_i\]
is an isomorphism. This can be tested against the generators $M(X)$, where $X$ runs through smooth separated $k$-schemes of finite type. This yields the maps
\[H^n_\Nis(X,\bigoplus_i R\alpha_* C_i)\to H^n_\et(X,\bigoplus_i  C_i),\quad n\in \Z\]
so we are reduced to showing that $H^n_\Nis(X,-)$ and $H^n_\et(X,-)$   commute with $\bigoplus_i$. Since hypercohomology spectral sequences are convergent  (by the hypothesis on $k$ for $H^n_\et(X,-)$), we are reduced to the case of direct sums of sheaves, and the result is true because the Nisnevich and étale sites are both coherent.
\end{proof}

\subsection*{Acknowledgement} I thank Takao Yamazaki for a discussion which led to this article.

\end{document}